\newcommand{\p}{\mathbb{P}} 
\newcommand{\E}{\mathbb{E}} 
\newcommand{\eps}{\varepsilon} 
\DeclareMathOperator{\Var}{Var} 
\newcommand{\wh}{\widehat}  
\newcommand{\wt}{\widetilde} 
\def\cE{{\mathcal E}}
\def\cS{{\mathcal S}}
\newtheorem{theorem}{Theorem}
\newtheorem{lemma}[theorem]{Lemma}
\newtheorem{corollary}[theorem]{Corollary}
\begin{document}

\title{Finding a planted clique by adaptive probing}
\author{
        Mikl\'os Z.\ R\'acz 
        \thanks{Princeton University; \texttt{mracz@princeton.edu}. Research supported in part by NSF grant DMS 1811724.}
        \and         
		Benjamin Schiffer
        \thanks{Princeton University \texttt{bgs3@princeton.edu}.}
}
\date{\today}

\maketitle


\begin{abstract}
We consider a variant of the planted clique problem where we are allowed unbounded computational time but can only investigate a small part of the graph by adaptive edge queries. We determine (up to logarithmic factors) the number of queries necessary both for detecting the presence of a planted clique and for finding the planted clique. 

Specifically, let $G \sim G(n,1/2,k)$ be a random graph on $n$ vertices with a planted clique of size $k$. We show that no algorithm that makes at most $q = o(n^2 / k^2 + n)$ adaptive queries to the adjacency matrix of $G$ is likely to find the planted clique. On the other hand, when $k \geq (2+\eps) \log_2 n$ there exists a simple algorithm (with unbounded computational power) that finds the planted clique with high probability by making $q = O( (n^2 / k^2) \log^2 n + n \log n)$ adaptive queries. For detection, the additive $n$ term is not necessary: the number of queries needed to detect the presence of a planted clique is $n^2 / k^2$ (up to logarithmic factors). 
\end{abstract}


\section{Introduction} \label{sec:intro} 

In the planted clique problem the goal is to find a clique that is planted within an Erd\H{o}s-R\'enyi random graph. 
This problem has received widespread attention in the past few decades 
because there exists a (wide) range of clique sizes for which 
it is information-theoretically possible to find the planted clique 
but there are no known polynomial-time algorithms to do so~\cite{jerrum1992large,kuvcera1995expected,AKS98,feige2010finding,DGGP14,DM15}. 
In this regime it is conjectured to be computationally hard to find the planted clique 
and this conjecture forms the basis of numerous average-case complexity results in recent years~\cite{berthet2013optimal,berthet2013complexity,gao2017sparse,brennan2018reducibility,brennan2019optimal}.

In this paper we consider a variant of the planted clique problem where we are allowed unbounded computational time but can only investigate a small part of the graph by adaptive edge queries. 
We consider the problems of detection and estimation under this model, and 
determine (up to logarithmic factors) the number of queries necessary both for detecting the presence of a planted clique and for finding the planted clique.

In the problems we consider there is an underlying $n$ vertex graph $G$ with vertex set $\left[ n \right] := \left\{ 1, 2, \ldots, n \right\}$. 
The algorithms that we consider are allowed unbounded computational power but we restrict the number of edges they are allowed to inspect. 
Specifically, we consider algorithms that evolve dynamically over a certain number of steps. 
In the first step, the algorithm chooses a pair $(i_1, j_1)$, $1 \leq i_1 < j_1 \leq n$, and asks whether this pair is an edge or not. 
Depending on the outcome, the algorithm selects a second pair $(i_2, j_2)$, $1 \leq i_2 < j_2 \leq n$, and asks whether this pair is an edge or not. 
It then selects $(i_3, j_3)$, and so on. 
The algorithm may ask $q$ such edge queries and use unbounded computational time to produce an output.

The detection problem can be phrased as a simple hypothesis testing problem. 
Under the null hypothesis $H_{0}$, the graph $G$ is an Erd\H{o}s-R\'enyi random graph with edge density $1/2$. 
Under the alternative hypothesis $H_{1}$, the graph $G$ is drawn from the planted clique model with clique size $k$. 
That is, we first choose a (uniformly) random subset of the vertices $K \subseteq \left[ n \right]$ of size $\left| K \right| = k$, we connect all pairs of vertices in $K$---that is, the vertices in $K$ form a clique---and every other pair of vertices is connected independently with probability $1/2$. 
In short:  
\begin{equation}\label{eq:hypothesis}
 H_{0} : G \sim G(n,1/2), 
 \qquad \qquad
 H_{1} : G \sim G(n,1/2,k).
\end{equation}
We denote the two probability distributions over $n$ vertex graphs by $\p_{0}$ and $\p_{1}$, respectively. 
An algorithm $A$ for detection under the adaptive edge query model makes up to $q$ adaptive edge queries to $G$ and then outputs a hypothesis in $\left\{ 0, 1 \right\}$. 
We measure the performance of an algorithm $A$ by its risk, which is defined as the sum of its type I and type II errors: 
\[
 R(A) := \p_{0} \left( A(G) = 1 \right) + \p_{1} \left( A(G) = 0 \right). 
\]
If an algorithm $A$ achieves vanishing risk---$R(A) = o(1)$ as $n \to \infty$---then we say that $A$ can detect the presence of a planted clique; otherwise, we say that it cannot do so. 

The following theorem determines (up to logarithmic factors) the number of queries necessary to detect the presence of a planted clique. All logarithms in this paper are in base~2. 
\begin{theorem}[Detecting a planted clique]\label{thm:detection}
 Consider the hypothesis testing problem in~\eqref{eq:hypothesis}. 
 \begin{enumerate}[(a)]
  \item\label{thm:detection_lb} Let $q = o(n^2 / k^2)$ as $n \to \infty$. 
  If an algorithm $A$ makes at most $q$ adaptive edge queries then its risk must satisfy $R(A) \geq 1 - o(1)$ as $n \to \infty$. 
  \item\label{thm:detection_ub} Suppose that $k \geq (2+\eps) \log n$ for some constant $\eps > 0$ and let $\eps_{0} > 0$ be arbitrary. 
  There exists an algorithm (with unlimited computational power) that can detect the presence of a planted clique by querying 
  \[
   q = (2+\eps_{0}) \frac{n^{2}}{k^{2}} \log^{2} n
  \]
  pairs of vertices. Moreover, the queries can be nonadaptive. 
 \end{enumerate}
\end{theorem}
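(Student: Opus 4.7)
The plan is to bound the total variation distance between the algorithm's view under $\p_0$ and $\p_1$ via a coupling. Draw $G_0 \sim G(n, 1/2)$ and, independently, a uniformly random $k$-subset $K \subseteq [n]$; let $G_1$ be $G_0$ with every pair inside $K$ turned into an edge. Then marginally $G_0 \sim \p_0$ and $G_1 \sim \p_1$. Run $A$ in parallel on $G_0$ and $G_1$ using a common internal randomness tape. Because $A$'s next query depends only on the previous responses, the two executions stay in lockstep (same queries, same responses) up to and including the first query $(i,j)$ with $i, j \in K$ and $G_0(i,j) = 0$. Up to that moment the sequence of queried pairs is a deterministic function of $G_0$ and the algorithm's randomness, and hence is independent of $K$. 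A union bound, conditional on $G_0$ and the randomness, then shows that the probability any of the (at most) $q$ queries has both endpoints in $K$ is at most $q\binom{k}{2}/\binom{n}{2} = O(qk^2/n^2) = o(1)$. This bounds the total variation distance between the two output distributions by $o(1)$, so $R(A) \geq 1 - o(1)$.

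\paragraph{Part (b): Upper bound.}
I will use a nonadaptive subsample-and-check algorithm. Pick a constant $C \in (2, 2+\eps]$, draw a uniformly random subset $S \subseteq [n]$ of size $s := \lceil C(n/k)\log n \rceil$ (noting $s \leq n$ since $k \geq (2+\eps)\log n$), query every one of the $\binom{s}{2}$ pairs inside $S$, and output $H_1$ iff the clique number $\omega(G[S])$ exceeds a threshold $t$ chosen strictly between the two natural scales---for instance $t := \lceil 2\log s + \sqrt{\log n}\rceil$, so that $(2+o(1))\log s < t < C\log n$. Under $\p_0$, $G[S] \sim G(s, 1/2)$ has clique number $(2+o(1))\log s$ w.h.p.\ by Matula's classical result, so $t$ is not exceeded. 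Under $\p_1$, $|K \cap S|$ is hypergeometric with mean $sk/n = C\log n$; a Chernoff bound for the hypergeometric yields $|K \cap S| \geq (1-o(1)) C\log n > t$ w.h.p., and since $K \cap S$ induces a clique in $G[S]$ the threshold is exceeded. The total number of queries is $\binom{s}{2} \leq s^2/2 = (C^2/2)(n/k)^2\log^2 n$, so choosing $C := \min\{\sqrt{4+2\eps_0},\, 2 + \eps/2\}$ achieves $C > 2$, $s \leq n$, and $C^2/2 \leq 2 + \eps_0$, matching the claimed bound.

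\paragraph{Main obstacle.}
For part~(a) the delicate point is justifying that, in the coupling, the sequence of queried pairs is independent of $K$; this has to be unfolded step by step, and is exactly where the adaptivity of the algorithm must be tamed. For part~(b) the main work lies in assembling sharp enough concentration inputs (Matula's theorem for $\omega(G(s,1/2))$ and a hypergeometric tail bound for $|K \cap S|$) and then selecting constants $C$ and $t$ that separate the two regimes uniformly in $\eps_0 > 0$; the hypothesis $k \geq (2+\eps)\log n$ is used precisely to allow $C > 2$ while still keeping the sample size $s$ at most $n$.
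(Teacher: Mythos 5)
Your proposal is correct. For part (b) you give essentially the paper's algorithm: sample a set $S$ of size $\Theta((n/k)\log n)$, query all internal pairs, and threshold on $\omega(G[S])$; the differences (random versus fixed $S$, the threshold $2\log s+\sqrt{\log n}$ versus the paper's $(2+\eps'/2)\log n$, Chernoff rather than Chebyshev for the hypergeometric tail) are cosmetic and both handle the constant accounting correctly.

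For part (a) you take a genuinely different route from the paper. The paper first reduces (Lemma~\ref{lem:reduction}) to a ``marked-set'' variant in which a pair query returns true iff both endpoints are planted, and then (Lemma~\ref{lem:prob_two_marked_elements}) computes the exact conditional law of $K$ given that all $q$ queries returned false along a fixed deterministic query tree, concluding that this event has probability at least $1-q\,k(k-1)/(n(n-1))$. You instead build the natural monotone coupling of $\p_0$ and $\p_1$ (draw $K$ independently of $G_0$ and force the internal edges), run $A$ on both graphs with a shared tape, observe that divergence can only occur at a query with both endpoints in $K$, and---after conditioning on $(G_0,\text{tape})$, which freezes the $G_0$-execution's query sequence and is independent of $K$---union-bound. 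Both arrive at the same quantitative conclusion via the same core observation (no query is likely to land inside $K$). Your coupling is somewhat more streamlined for detection in isolation; the paper's choice of the marked-set reduction is motivated by economy, since the explicit conditional law of $K$ given the all-false event is reused verbatim to prove the estimation lower bound (Lemma~\ref{lem:estimation_lb_variant}), which a pure TV-coupling argument would not directly supply.
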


If we can detect the presence of a planted clique, the natural next goal is to find it. 
The following theorem determines (up to logarithmic factors) the number of queries necessary to find the planted clique. 
In particular, it shows that an extra $n \log n$ queries suffice compared to detection and that this is tight (up to logarithmic factors). 

\begin{theorem}[Finding the planted clique]\label{thm:estimation}
 Let $G \sim G(n,1/2,k)$, where $1 \leq k < n$. 
 \begin{enumerate}[(a)]
  \item\label{thm:estimation_lb} Let $q = o(n^2 / k^2 + n)$ as $n \to \infty$.   
  No algorithm that makes at most $q$ adaptive edge queries can find the planted clique. 
  That is, any estimator $\wh{K}$ of the planted clique $K$ that is based on at most $q$ adaptive edge queries satisfies 
  $\p_{1} \left( \wh{K} = K \right) = o(1)$ as $n\to \infty$. 
  \item\label{thm:estimation_ub} Suppose that $k \geq (2+\eps) \log n$ for some constant $\eps > 0$ and let $\eps_{0} > 0$ be arbitrary. 
  There exists an algorithm (with unlimited computational power) that adaptively queries 
  \[
   q = (2+\eps_{0}) \frac{n^{2}}{k^{2}} \log^{2} n + (1+\eps_{0}) n \log n 
  \]
  pairs of vertices 
  and finds the planted clique with probability $1 - o(1)$ as $n\to\infty$. 
 \end{enumerate}
\end{theorem}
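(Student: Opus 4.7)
The plan is to attack the two parts of the theorem separately, exploiting the fact that the hypothesis $q = o(n^2/k^2 + n)$ implies both $q = o(n^2/k^2)$ and $q = o(n)$, and designing a two-phase algorithm for the upper bound. For part~(\ref{thm:estimation_lb}) I would first reduce to Theorem~\ref{thm:detection}(\ref{thm:detection_lb}) to rule out $q = o(n^2/k^2)$: any estimator $\wh{K}$ of the planted clique also yields a detection procedure by declaring $H_1$ whenever $\wh{K}$ is a clique in the queried subgraph, so such an estimator would contradict the detection lower bound. It remains to rule out $q = o(n)$. For this I would use a symmetry argument. Let $T$ denote the (random) set of vertices touched by the algorithm's queries, so $|T| \leq 2q$, and write $U := [n] \setminus T$. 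The key observation is that the likelihood of the observed trajectory depends on $K$ only through $K \cap T$: every queried edge lies inside $T$, so for any $K, K'$ with $K \cap T = K' \cap T$ one has $e \subseteq K$ iff $e \subseteq K'$ for every queried edge $e$, and hence the response distributions coincide. Consequently, conditional on the trajectory, $K \cap U$ is uniformly distributed over $(k - |K \cap T|)$-subsets of $U$, yielding
\[
 \p_1\left( \wh{K} = K \right) \;\leq\; \p_1(K \subseteq T) + \frac{1}{|U|} \;\leq\; \p_1(K \subseteq T) + \frac{1}{n - 2q}.
\]
Since the second term is $o(1)$ when $q = o(n)$, it suffices to show $\p_1(K \subseteq T) = o(1)$.

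For part~(\ref{thm:estimation_ub}) I would design a two-phase algorithm. \emph{Phase 1 (seed).} Pick a uniformly random $S \subseteq [n]$ of size $|S| = (2 + \eps_0/3)(n/k)\log n$ and query all $\binom{|S|}{2}$ edges inside $S$, using at most $(2 + \eps_0)(n^2/k^2)\log^2 n$ queries. By hypergeometric concentration $|K \cap S| \geq (2 + \eps_0/4)\log n$ with probability $1 - o(1)$, while the maximum clique of $G(|S|,1/2)$ is $(2 + o(1))\log_2 |S| \leq (2 + o(1))\log n$; so $K \cap S$ is with high probability the unique maximum clique of $G[S]$, which unlimited computation can identify. \emph{Phase 2 (completion).} Fix any $K_0' \subseteq K \cap S$ with $|K_0'| = \lceil (1 + \eps_0/2)\log n \rceil$. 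For each $v \in [n] \setminus S$, query all edges in $\{v\} \times K_0'$ and include $v$ in $\wh{K}$ iff every response is $1$; this uses at most $(1 + \eps_0) n \log n$ queries. Every $v \in K \setminus S$ is included correctly because its edges to $K_0' \subseteq K$ are forced to $1$; for $v \notin K$, a false positive requires $|K_0'|$ independent $\mathrm{Bernoulli}(1/2)$ outcomes to all equal $1$, which happens with probability at most $n^{-(1 + \eps_0/2)}$, so a union bound gives $o(1)$ false positives overall. Combining the two phases produces $\wh{K} = K$ with probability $1 - o(1)$ at the claimed query budget.

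The hard part will be bounding $\p_1(K \subseteq T)$ for moderately large $k$. When $k > 2q$ the event is vacuous since $|T| \leq 2q$; and when $k$ is so small that $n^2/k^2 = \omega(n)$, the lower bound is already implied by the detection reduction above. In the intermediate regime the difficulty is that an \emph{adaptive} algorithm could in principle concentrate its queries on suspected clique vertices. The cleanest route I see is a coupling between $G \sim G(n,1/2,k)$ and the null graph $\wt{G} \sim G(n,1/2)$ that agrees off of $\binom{K}{2}$, together with a bound on the expected number of queries the algorithm places inside $\binom{K}{2}$: in the regime $q = o(n)$ this number should be small enough that the induced discrepancy in $T$ is negligible, reducing the bound on $\p_1(K \subseteq T)$ to the oblivious estimate $\p_0(K \subseteq T) \leq (2q/n)^k = o(1)$.
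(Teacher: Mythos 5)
Your part~(\ref{thm:estimation_ub}) is essentially the paper's algorithm (seed set, find the max clique inside it, then probe the complement against a $(1+\eps_0)\log n$-element core), with only cosmetic differences; that portion is fine. The issues are all in part~(\ref{thm:estimation_lb}).

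First, a small logical point: $q = o(n^2/k^2 + n)$ does \emph{not} imply both $q=o(n^2/k^2)$ and $q=o(n)$; it implies that (along any subsequence) at least one of the two holds. Your subsequent case split is compatible with the correct reading, so this is only a slip in the overview sentence, but be careful.

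Second, the reduction of the $q=o(n^2/k^2)$ case to Theorem~\ref{thm:detection}(\ref{thm:detection_lb}) does not go through as written. Your proposed detector declares $H_1$ whenever $\wh K$ forms a clique among the queried pairs. This controls the type~II error by $1-\p_1(\wh K = K)$, but says nothing about the type~I error: under $H_0$ an estimator is free to output a set $\wh K$ containing few (or no) queried internal pairs, in which case it is vacuously a ``clique in the queried subgraph'' and the detector fires with probability close to~$1$. Thus the detector's risk is not bounded away from~$1$, and no contradiction with the detection lower bound is obtained. The paper avoids this entirely by arguing \emph{directly}: after the reduction to the pair-query variant, Lemma~\ref{lem:prob_two_marked_elements} shows that with probability $1-o(1)$ all $q$ pair queries evaluate to false, and conditional on this the posterior on $K$ is uniform over a $(1-o(1))$ fraction of all $\binom{n}{k}$ subsets, so any estimator succeeds with probability $(1+o(1))/\binom{n}{k} = o(1)$. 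Your reduction-based route would need additional fresh confirmation queries and a careful budget accounting to fix the type~I error, and would still not be simpler than the direct posterior argument.

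Third, in the $q=o(n)$ case your posterior-uniformity observation (the trajectory likelihood depends on $K$ only through $K\cap T$, hence conditionally $K\cap U$ is uniform over appropriately sized subsets of $U$) is the right idea and is close to the paper's ``strong pair query'' argument. However, your bound
$\p_1(\wh K = K) \le \p_1(K\subseteq T) + 1/|U|$
misses a boundary case: $\binom{|U|}{k-|K\cap T|}\ge |U|$ fails not only when $k-|K\cap T|=0$ (your term $\p_1(K\subseteq T)$) but also when $k-|K\cap T|=|U|$, i.e.\ when $U\subseteq K$. This is exactly the paper's Case~3, which arises when $k \ge n-2q$, and it needs its own bound ($\wt\p_1(X=k-(n-2q))\le 2q/n$ in the paper's notation). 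More importantly, you correctly flag that bounding $\p_1(K\subseteq T)$ for an \emph{adaptive} algorithm is the crux, and you do not have a working argument for it. The paper's key lemma, which your proposal lacks, is that adaptivity buys nothing here: by upgrading to ``strong'' pair queries (reveal the marked/unmarked status of both endpoints) and then using the uniformity of the conditional law of $K$ on the untouched elements, one shows by induction that without loss of generality the algorithm queries the disjoint pairs $(1,2),(3,4),\dots,(2q-1,2q)$. Once that reduction is in place $T=[2q]$ is deterministic and $X=|K\cap[2q]|$ is exactly hypergeometric, giving $\p_1(K\subseteq T) = \wt\p_1(X=k) \le 2q/n = o(1)$ immediately. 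Your proposed coupling between $G(n,1/2,k)$ and $G(n,1/2)$ together with an ``expected number of in-clique queries'' bound is a genuinely different (and considerably murkier) route; as stated it does not yield the required bound, because an adaptive algorithm's query set $T$ under the two measures can differ substantially once a single clique pair is discovered, so the reduction to the oblivious estimate $\p_0(K\subseteq T)\le(2q/n)^k$ is not justified.
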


Theorems~\ref{thm:detection} and~\ref{thm:estimation} give a complete phase diagram of when detection and estimation are possible as a function of the clique size $k$ and the number of queries $q$ (up to some boundary cases). 
A natural parametrization is to take both $k$ and $q$ to be polynomial in $n$: 
$k = n^{\gamma}$ 
and 
$q = n^{\delta}$ 
for some $\gamma \in (0,1)$ and $\delta \in (0,2)$. 
Corollary~\ref{cor:phase_diagram} summarizes the results with this parametrization---see also Figure~\ref{fig:phase_diagram} for an illustration. 
Note in particular the region of the phase space where detection is possible but estimation is not. 
Note also that the conjectured computational threshold is at~$\gamma = 1/2$. 

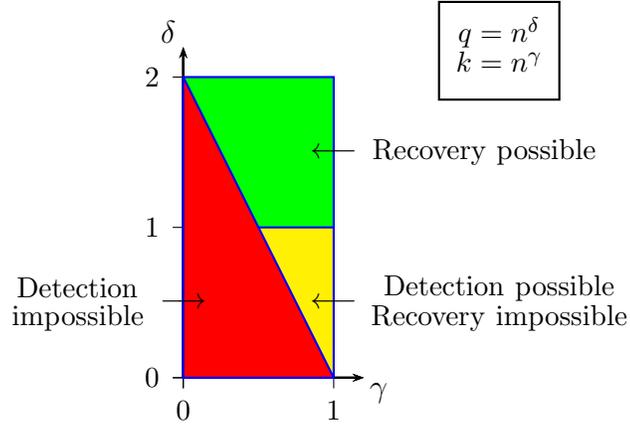
\begin{figure}[t!]\label{fig:phase_diagram}
 \centering 
   \psset{xunit=2cm,yunit=2cm}
   \begin{pspicture}(-0.5,-0.3)(2,2.4)
    \psaxes[labelFontSize=\textstyle]{->}(0,0)(0,0)(1.2,2.2)
    \pspolygon*[linecolor=red](0,2)(1,0)(0,0)
    \pspolygon*[linecolor=green](0,2)(0.5,1)(1,1)(1,2)
    \pspolygon*[linecolor=yellow](0.5,1)(1,0)(1,1)
    \pspolygon[linecolor=blue](0,0)(0,2)(1,2)(1,0)
    \psline[linecolor=blue](0,2)(1,0)
    \psline[linecolor=blue](0.5,1)(1,1)
    \rput(-0.1,2.3){\large $\delta$}
    \rput(2.0,1.5){Recovery possible}
    \rput(1.0,1.5){$\longleftarrow$}
    \rput(2.1,0.6){Detection possible}
    \rput(2.1,0.4){Recovery impossible}
    \rput(1.0,0.5){$\longleftarrow$}
    \rput(2.1,2.3){$q=n^\delta$}
    \rput(2.1,2.1){$k = n^\gamma$}
    \pspolygon[linecolor=black](1.7,1.85)(1.7,2.5)(2.5,2.5)(2.5,1.85)
    \rput(-0.7,0.6){Detection}
    \rput(-0.7,0.4){impossible}
    \rput(-0.0,0.5){$\longrightarrow$}
    \rput(1.3,-0.1){\large $\gamma$}
   \end{pspicture}
 \caption{Phase diagram for detecting the presence of a planted clique and for finding the planted clique, 
 as a function of the clique size $k = n^{\gamma}$ and the number of adaptive edge queries $q = n^{\delta}$. The horizontal axis contains $\gamma \in (0,1)$, the vertical axis contains $\delta \in (0,2)$.}
\end{figure}

\begin{corollary}[Phase diagram]\label{cor:phase_diagram}
 Suppose that 
 $k = n^{\gamma}$ 
 and 
 $q = n^{\delta}$ 
 for some $\gamma \in (0,1)$ and $\delta \in (0,2)$. 
 \begin{enumerate}[(a)]
  \item\label{cor:impossible} 
  If $\delta < 2 - 2 \gamma$, 
  then detecting the presence of a planted clique is impossible. 
  \item\label{cor:detection_possible} 
  If $\delta > 2 - 2 \gamma$ and $\delta < 1$, 
  then it is possible to detect the presence of a planted clique, 
  but it is impossible to find the planted clique. 
  \item\label{cor:finding_possible} 
  If $\delta > 2 - 2 \gamma$ and $\delta > 1$, 
  then it is possible to find the planted clique. 
 \end{enumerate}
\end{corollary}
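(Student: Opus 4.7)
The plan is to derive the corollary by direct substitution $k = n^\gamma$ and $q = n^\delta$ into Theorems~\ref{thm:detection} and~\ref{thm:estimation}, using that $n^2/k^2 = n^{2-2\gamma}$ and that polylogarithmic factors in $n$ are dominated by any polynomial gap in exponents. Each of the three regions of the $(\gamma,\delta)$ plane should correspond to exactly one part of one of the two theorems, so the proof is essentially a bookkeeping exercise.

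For part (\ref{cor:impossible}), I would simply note that $\delta < 2-2\gamma$ gives $q = n^\delta = o(n^{2-2\gamma}) = o(n^2/k^2)$, and then invoke Theorem~\ref{thm:detection}(\ref{thm:detection_lb}) to conclude that no algorithm achieves vanishing risk.

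For part (\ref{cor:detection_possible}), I would separately handle the positive detection claim and the negative finding claim. For detection, since $\gamma > 0$ we have $k = n^\gamma \geq (2+\eps) \log n$ for some fixed $\eps > 0$ and all large enough $n$, and the assumption $\delta > 2-2\gamma$ yields $n^\delta / (n^{2-2\gamma} \log^2 n) \to \infty$, so $q \geq (2+\eps_0)(n^2/k^2) \log^2 n$ eventually for any $\eps_0 > 0$; Theorem~\ref{thm:detection}(\ref{thm:detection_ub}) then gives a detection algorithm. For the impossibility of finding, the key observation is that the two hypotheses $\delta < 1$ and $\delta > 2-2\gamma$ together force $\gamma > 1 - \delta/2 > 1/2$, which gives $n^2/k^2 = n^{2-2\gamma} = o(n)$, so $n^2/k^2 + n = \Theta(n)$; combined with $q = n^\delta = o(n)$ this yields $q = o(n^2/k^2 + n)$, and Theorem~\ref{thm:estimation}(\ref{thm:estimation_lb}) rules out recovery.

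For part (\ref{cor:finding_possible}), the combined assumption $\delta > \max\{2-2\gamma,\, 1\}$ guarantees that $q = n^\delta$ eventually dominates both $(2+\eps_0)(n^2/k^2) \log^2 n$ and $(1+\eps_0) n \log n$ for any $\eps_0 > 0$ (again using that polylogarithmic factors are absorbed by the polynomial gap), so the algorithm from Theorem~\ref{thm:estimation}(\ref{thm:estimation_ub}) finds the planted clique with high probability. There is no real obstacle in this proof: the only mildly non-trivial step is the implication $\gamma > 1/2$ in part (\ref{cor:detection_possible}), which is what ensures that the additive $n$ term, rather than $n^2/k^2$, is the binding constraint in the lower bound for recovery.
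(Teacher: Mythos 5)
Your derivation is correct and is exactly the bookkeeping substitution that the corollary calls for; the paper does not spell out a separate proof, so your approach and the implicit one coincide. One small simplification in part~(\ref{cor:detection_possible}): you do not actually need the observation that $\gamma > 1/2$. Since $n^2/k^2 + n \geq n$ always, the hypothesis $\delta < 1$ alone already gives $q = n^\delta = o(n) = o(n^2/k^2 + n)$, so Theorem~\ref{thm:estimation}(\ref{thm:estimation_lb}) applies directly; the detour through the inequality $\gamma > 1 - \delta/2 > 1/2$ and the fact $n^2/k^2 = o(n)$ identifies which term in the lower bound is binding, but is not needed for the conclusion.
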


These results raise several open questions. 
First, can the logarithmic factors be closed, to obtain results that are tight up to constant factors? 
Second, how do these results change if we plant a different subgraph instead of a clique? 
For instance, one can plant a dense random graph $G(k,q)$ with $q > 1/2$. 
Finally, while we neglected all computational considerations in this paper, are there any connections to average-case computational hardness? 
We leave exploring these questions to future work.

The rest of this paper is outlined as follows. 
After discussing motivation and related work in the remainder of the introduction, 
we turn to algorithms for detection and estimation in Section~\ref{sec:algorithms}, 
proving Theorem~\ref{thm:detection}\eqref{thm:detection_ub} and Theorem~\ref{thm:estimation}\eqref{thm:estimation_ub}. 
Finally, we prove the impossibility results of Theorem~\ref{thm:detection}\eqref{thm:detection_lb} and Theorem~\ref{thm:estimation}\eqref{thm:estimation_lb} in Section~\ref{sec:lb}.

\subsection{Motivation} \label{sec:motivation} 

There are several potential applications where understanding the query complexity necessary to finding cliques may be of interest. 
For instance, in scientific applications one may wish to find closely related entities (corresponding to a clique or dense subgraph), and querying an edge may correspond to performing a physical experiment which is costly and/or time-consuming.

Another potential application is to the analysis of social media connections. Here the nodes of a graph represent individuals and the edges represent connections between individuals such as Facebook friends, Twitter following, or LinkedIn connections. Access to these connections may be expensive to obtain or limited (due to privacy limitations or any other source of incomplete information), and hence query complexity may be relevant when trying to reconstruct a specific close-knit group within the network. 

The planted clique problem and related subgraph inference problems have been applied to a variety of applications, including biological networks~\cite{bio}, cryptography~\cite{crypto}, and finance~\cite{finance}. 
Obtaining full information about the underlying networks in these applications may not be possible due to queries being expensive and/or limited, and hence the planted clique problem with limited adaptive probing could be relevant to these same applications.

\subsection{Related work} \label{sec:related_work} 

This paper is a natural follow-up to the recent work of Feige, Gamarnik, Neeman, R\'acz, and Tetali~\cite{FGNRT18}, 
where the authors consider the problem of finding cliques in an Erd\H{o}s-R\'enyi random graph under the same adaptive edge query model. 
While the largest clique in an Erd\H{o}s-R\'enyi random graph with edge density $1/2$ has size approximately $2 \log n$, 
the current best algorithm that makes at most $q = O(n^{\delta})$ adaptive edge queries 
finds a clique of size approximately $(1+\delta/2) \log n$. 
Feige~et~al.~\cite{FGNRT18} show an impossibility result if the adaptivity of the algorithm is limited: 
any algorithm that makes $q = O(n^{\delta})$ edge queries ($\delta < 2$) in $\ell$ rounds 
finds cliques of size at most $(2-\eps) \log n$ where $\eps = \eps \left( \delta, \ell \right) > 0$. 
Very recently, Alweiss, Ben Hamida, He, and Moreira~\cite{alweiss2019subgraph} improved upon this result, showing that there exists such $\eps$ that depends only on $\delta$ and not on $\ell$. 
However, closing the gap between the upper and lower bounds remains an open problem.

Several recent works consider finding structure in a random graph under such an adaptive edge query model. 
Ferber, Krivelevich, Sudakov, and Vieira studied finding a Hamilton cycle~\cite{ferber2016finding} and finding long paths~\cite{ferber2017finding}, while Conlon, Fox, Grinshpun, and He~\cite{conlon2019online} studied finding a copy of a fixed target graph (such as a constant size clique). All of these works focus on sparse random graphs.

As mentioned in the introduction, the planted clique problem has been studied from many angles in the past few decades~\cite{jerrum1992large,kuvcera1995expected,AKS98,feige2010finding,DGGP14,DM15,berthet2013optimal,berthet2013complexity,gao2017sparse,brennan2018reducibility,brennan2019optimal}. 
To the best of our knowledge, it has not been considered under an edge query model before. 
It would be interesting to see if there are any connections to computational aspects of the planted clique problem. 
The recent work of Mardia, Ali, and Chandrasekher~\cite{mardia2020finding} develops sublinear\footnote{Here the input size is $\Theta(n^{2})$, hence sublinear refers to $o(n^{2})$.} time algorithms for finding the planted clique in the regime $k = \omega \left( \sqrt{n \log \log n} \right)$ and makes such connections. As the authors point out, our results imply an $\Omega \left( n \right)$ running time lower bound for finding the planted clique, which shows that their algorithms are optimal (up to logarithmic factors) whenever $k = \Omega \left( n^{2/3} \right)$.

Finally, we mention that query complexity 
arises naturally in many other areas, such as clustering~\cite{vinayak2016crowdsourced,mazumdar2017clustering,mazumdar2017query}, where answers to queries are often noisy due to them being crowdsourced, 
and community detection~\cite{hartmann2016clustering,anagnostopoulos2016community}, where the evolution of the underlying graph necessitates repeated queries. 
Statistical queries have also been widely studied~\cite{kearns1998efficient}, including in the setting of the planted clique model~\cite{feldman2017statistical}. 
More generally, our work fits into the framework of online learning, a large and rapidly growing area which is beyond the scope of this article to survey.

\section{Algorithms} \label{sec:algorithms} 

We start with a simple sampling-based algorithm to detect the presence of a planted clique. 
This is contained in Section~\ref{sec:alg_detection} and proves Theorem~\ref{thm:detection}\eqref{thm:detection_ub}. 
We then extend this algorithm in Section~\ref{sec:alg_finding} to find all vertices of the planted clique, thus proving Theorem~\ref{thm:estimation}\eqref{thm:estimation_ub}. 

First, recall that the largest clique in an Erd\H{o}s-R\'enyi random graph has size approximately $2\log n$. 
In fact, very precise results are known. 
Define 
$\omega_{n} = 2 \log n - 2 \log\log n + 2 \log e - 1$. 
Matula~\cite{matula1972} showed that for any $\varepsilon >0$, the clique number (the size of the largest clique) $\omega (G)$ of a random graph $G$ drawn from $G(n,1/2)$ satisfies $\left\lfloor \omega_{n} - \varepsilon \right\rfloor \leq \omega (G) \leq \left\lfloor \omega_{n} + \varepsilon \right\rfloor$ with probability tending to $1$ as $n \to \infty$; see also~\cite{bollobas1976cliques}. 
For our purposes much weaker estimates suffice. 
Indeed, a first moment argument shows that $\p_{0} \left( \omega(G) \leq 2\log n + 3 \right) \to 1$ as $n \to \infty$ (see~\cite{lugosi17}).

\subsection{Detecting the presence of a planted clique} \label{sec:alg_detection} 

The basic idea in detecting the presence of a planted clique is to sample all pairs of vertices among a set $S \subseteq [n]$ of size $m := \left( 2 + \eps' \right) (n/k) \log n$. After these queries we learn the induced subgraph on~$S$ and we can use the size of the largest clique in $S$ as a statistic to distinguish between the hypotheses $H_{0}$ and $H_{1}$, as follows. We know (see above) that under $\p_{0}$ this statistic is at most $2 \log n + 3$ with probability $1 - o(1)$. On the other hand, under $\p_{1}$ the set $S$ contains, in expectation, $\left( 2 + \eps' \right) \log n$ vertices from the planted clique, 
so with probability $1-o(1)$ this statistic is at least $\left( 2 + \eps' / 2 \right) \log n$.

The following proof makes this reasoning precise. 
Note that this algorithm to detect the presence of a planted clique is nonadaptive, making all queries at the same time. 

\begin{proof}[Proof of Theorem~\ref{thm:detection}\eqref{thm:detection_ub}] 
Let $\eps' > 0$ be such that $2 \eps' + \eps'^{2}/2 \leq \eps_{0}$ and $\eps' \leq \eps$. 
First, we choose an arbitrary subset $S$ of the $n$ vertices of size 
$m := |S| = \left( 2 + \eps' \right) (n/k) \log n$; 
for instance, choose $S:= \left\{ 1, 2, \ldots, m \right\}$. 
We then query all pairs of vertices among $S$. 
This results in 
\[
\binom{m}{2} \leq \frac{m^{2}}{2} = (2 + 2\eps' + \eps'^{2} / 2) \frac{n^{2}}{k^{2}} \log^{2} n  
\leq (2 + \eps_{0}) \frac{n^{2}}{k^{2}} \log^{2} (n)
\]
queries. 
After the queries we know the induced subgraph on~$S$. 
In particular---due to the fact that we have no restrictions on computational power---we can compute the size of the largest clique in this induced subgraph. 
The algorithm then chooses a hypothesis based on this statistic: 
if~$S$ contains a clique of size at least 
$\left( 2 + \eps'/2 \right) \log n$, 
then it accepts the alternative hypothesis $H_{1}$; 
otherwise, it accepts the null hypothesis $H_{0}$.

We now argue that this algorithm achieves vanishing risk. First, as we discussed at the beginning of Section~\ref{sec:algorithms}, 
if $G \sim G \left( n, 1/2 \right)$, then the largest clique in $G$ has size approximately $2 \log n$. In particular, 
$\p_{0} \left( \omega (G) \geq \left( 2+ \eps'/2 \right) \log n \right) \to 0$ as $n \to \infty$; that is, the type I error vanishes in the limit. 

Next, assuming $H_{1}$, let $X$ denote the number of planted clique vertices in $S$. 
Observe that $X$ has a hypergeometric distribution with parameters $n$, $k$, and $m$. 
Thus we have that $\E \left[ X \right] = m \times (k/n) = (2+\eps') \log(n)$ 
and $\Var(X) \leq m \frac{k}{n}(1-\frac{k}{n}) \leq (2+\eps') \log(n)$, 
so Chebyshev's inequality implies that 
$\p_{1} \left( X \leq (2+\eps'/2) \log(n) \right) \leq c / \log(n)$ 
for some constant $c < \infty$ depending on $\eps'$. 
To conclude, note that if $X \geq (2+\eps'/2)\log n$ then $S$ contains a clique of size at least $(2+\eps'/2)\log n$. 
\end{proof}

\subsection{Finding the planted clique} \label{sec:alg_finding} 

In order to find the planted clique, we start with the same step as in Section~\ref{sec:alg_detection} above: 
we sample all pairs of vertices among a set $S \subseteq [n]$ of size $m := \left( 2 + \eps' \right) (n/k) \log n$. 
As we show below, with probability $1-o(1)$ under $\p_{1}$, the set of vertices in the largest clique in $S$ is exactly the set of vertices in $S$ that are in the planted clique. 
Thus the remaining goal is to identify the vertices of the planted clique that are not in $S$. 

To do this, a natural idea is to query all pairs of vertices where one vertex is part of the largest clique in $S$ and the other vertex is not in $S$. 
Any vertex that is in the planted clique and not in $S$ will necessarily be connected to all planted clique vertices in $S$, while vertices not in $S$ and not in the planted clique will not be connected to all of the planted clique vertices in $S$ (with probability $1-o(1)$ under $\p_{1}$). 
Thus in the second step the algorithm selects all vertices not in $S$ that were connected to all vertices in $S$ where the pair was queried. 

Finally, the algorithm outputs the union of the two sets of vertices identified in the two steps. The following proof makes all this precise and proves Theorem~\ref{thm:estimation}\eqref{thm:estimation_ub}. Note that in the second step we take only a subset of the largest clique in $S$---this is done in order to lessen the number of queries made. 
Note furthermore that this algorithm has limited adaptivity, as it can be implemented in two ``rounds''.

\begin{proof}[Proof of Theorem~\ref{thm:estimation}\eqref{thm:estimation_ub}] 
The algorithm for finding the planted clique consists of two steps, the first step being the same as the one used for detection. 
\begin{itemize}
 \item \textbf{Step 1:} Choose a subset $S$ of the $n$ vertices of size $m := |S| = \left( 2 + \eps' \right) (n/k) \log n$, where $\eps'$ is chosen as in Section~\ref{sec:alg_detection}. We then query all pairs of vertices among $S$. 
 \item \textbf{Step 2:} Let $D$ be the set of vertices in the largest clique in $S$. Let $D' \subseteq D$ be 
a fixed 
subset of size $\left( 1 + \eps_{0} \right) \log n$ (e.g., take $D'$ to be the $\left( 1 + \eps_{0} \right) \log n$ nodes in $D$ with lowest label). (If $\eps_{0}$ is large such that $|D| \leq (1+\eps_{0}) \log n$, then let $D' := D$.) 
 We then query all pairs of vertices where one of the vertices is in $D'$ and the other is in $[n] \setminus S$. 
 
 Let $T$ denote the vertices in $[n] \setminus S$ that are connected to all vertices in $D'$. 
 The algorithm then outputs $D \cup T$ as its estimate for the planted clique. 
\end{itemize}
We have seen in Section~\ref{sec:alg_detection} that we make at most 
$(2+\eps_{0}) (n^{2} / k^{2}) \log^2 n$ queries in the first step, 
while in the second step we make at most 
$(1+\eps_{0}) n \log n$ queries. 
We now argue that this algorithm succeeds in finding the planted clique with probability $1-o(1)$.

As we argued in Section~\ref{sec:alg_detection}, 
we have that 
$\p_{1} \left( \left| D \right| \geq \left( 2 + \eps'/2 \right) \log n \right) = 1 - o(1)$ 
as $n \to \infty$. 
Furthermore, with probability $1 - o(1)$, the set $D$ contains only planted clique vertices. 
Indeed, as we discussed at the beginning of Section~\ref{sec:algorithms}, with probability $1 - o(1)$ the largest clique in an Erd\H{o}s-R\'enyi random graph with edge density $1/2$ has size at most $2\log n + 3$, so no vertex outside of the planted clique is in a clique of size greater than $2\log n + 3$. 
Thus in the first step of the algorithm we have found at least $\left( 2 + \eps'/2 \right) \log n$ vertices of the planted clique. Moreover, we have found all vertices of the planted clique that are in $S$. 

Any vertex in $[n] \setminus S$ that is in the planted clique will be connected to every planted clique vertex and hence every vertex in $D$. Thus all vertices of the planted clique are contained in $D \cup T$. 
To see that there are no false positives in this set, note that the probability that a vertex not in the planted clique is connected to a fixed set of $(1+\eps_{0}) \log n$ planted clique vertices is 
$2^{-(1+\eps_{0}) \log n} = n^{-(1+\eps_{0})}$. 
Taking a union bound over vertices in $[n] \setminus S$, we see that the probability that there exists a vertex not in the planted clique that is in $T$ is at most $n^{-\eps_{0}}$. 
\end{proof}

Note that this algorithm succeeds in finding the planted clique even though it does not check that all pairs of vertices within the planted clique are connected. 
In fact, it checks the edge between $O \left( k \log n + \log^2 n \right)$ pairs of vertices within the planted clique, instead of the $\Theta \left( k^2 \right)$ pairs that exist.

\section{Lower bounds} \label{sec:lb} 

To prove our lower bounds we introduce a simpler variant problem that removes all graph structure from the problem. 
In this hypothesis testing problem we consider the set $[n] = \left\{ 1, 2, \ldots, n \right\}$, 
where each element of the set is either marked or unmarked. 
Under the null hypothesis $\wt{H}_{0}$, all elements are unmarked. 
Under the alternative hypothesis $\wt{H}_{1}$, a uniformly randomly chosen subset $K \subseteq [n]$ of size $|K| = k$ is chosen and its elements are marked, and the elements of $[n] \setminus K$ are unmarked. 
We denote the two probability distributions over $\left\{ \text{unmarked, marked} \right\}^{[n]}$ by $\wt{\p}_{0}$ and $\wt{\p}_{1}$, respectively. 

We consider algorithms that can adaptively query $\left(i,j\right)$ pairs, where $1 \leq i < j \leq n$. 
We refer to such queries as \emph{pair queries} to distinguish them from the \emph{edge queries} of the original problem. 
When pair $(i,j)$ is queried, 
the query evaluates to true if both $i$ and $j$ are marked 
and it evaluates to false otherwise. 
The algorithm may ask $q$ such adaptive pair queries 
and use unbounded computational time to produce an output 
in $\{0,1\}$ (corresponding to $\wt{H}_{0}$ or $\wt{H}_{1}$). 
We again measure the performance of an algorithm $\wt{A}$ by its risk, defined as 
\[
 \wt{R}(\wt{A}) := \wt{\p}_{0} \left( \wt{A} = 1 \right) + \wt{\p}_{1} \left( \wt{A} = 0 \right).
\]
We consider randomized algorithms as well, in which case the type I and type II error probabilities in the display above are taken over the internal randomness of the algorithm as well. 

The following lemma connects this variant problem with the original hypothesis testing problem. 

\begin{lemma}[Reduction]\label{lem:reduction}
Suppose that there exists an algorithm $A$ that makes at most $q$ adaptive edge queries and achieves risk $R(A) \leq r$ for the hypothesis testing problem in~\eqref{eq:hypothesis}. 
Then there exists an algorithm $\wt{A}$ that makes at most $q$ adaptive pair queries in the variant problem described above---distinguishing between $\wt{H}_{0}$ and $\wt{H}_{1}$---and achieves risk $\wt{R}(\wt{A}) \leq r$. 
\end{lemma}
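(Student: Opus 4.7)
The plan is to construct $\wt{A}$ as a direct simulation of $A$: each adaptive edge query made by $A$ will be answered using one adaptive pair query plus a fresh independent coin flip, in such a way that the joint distribution of the answers given to $A$ exactly matches the corresponding distribution in the original hypothesis testing problem. First, I would fix $A$ as a (possibly randomized) algorithm for the original problem, and let $\wt{A}$ internally maintain a simulated copy of $A$. Whenever the simulated $A$ requests the edge query $(i,j)$, the algorithm $\wt{A}$ issues the pair query $(i,j)$; if the pair query returns true, $\wt{A}$ feeds the answer $1$ (``edge present'') back into $A$; if the pair query returns false, $\wt{A}$ samples an independent fair coin and feeds that bit back into $A$. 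When $A$ eventually outputs a hypothesis in $\{0,1\}$, $\wt{A}$ outputs the same bit. By construction, $\wt{A}$ uses at most $q$ pair queries and the same amount of auxiliary randomness as $A$ plus $q$ extra coin flips.

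Next I would verify that the induced distributions match. Under $\wt{H}_{0}$ every element is unmarked, so every pair query returns false, and hence every bit fed back to $A$ is an independent $\mathrm{Bernoulli}(1/2)$ random variable. This is exactly the distribution of adaptive edge queries to $G \sim G(n,1/2)$, since in that model the edges are i.i.d.\ $\mathrm{Bernoulli}(1/2)$ and can be revealed on demand without changing the joint law. Therefore the output of $\wt{A}$ under $\wt{\p}_{0}$ has the same distribution as the output of $A$ under $\p_{0}$, so
\[
 \wt{\p}_{0}\bigl(\wt{A} = 1\bigr) = \p_{0}\bigl(A(G) = 1\bigr).
\]

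Under $\wt{H}_{1}$ a uniformly random subset $K \subseteq [n]$ of size $k$ is marked, so the pair query $(i,j)$ returns true iff $\{i,j\} \subseteq K$. In that case $\wt{A}$ returns $1$ to the simulated $A$, matching the fact that every pair inside the planted clique is an edge of $G \sim G(n,1/2,k)$; otherwise $\wt{A}$ returns an independent fair coin, matching the fact that every non-clique pair is an independent $\mathrm{Bernoulli}(1/2)$ edge. Since the planted set $K$ in the two models has the same law, the full transcript of answers given to $A$ has the same distribution in the two settings, and hence
\[
 \wt{\p}_{1}\bigl(\wt{A} = 0\bigr) = \p_{1}\bigl(A(G) = 0\bigr).
\]
Summing the two error terms yields $\wt{R}(\wt{A}) = R(A) \leq r$, completing the reduction.

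There is no real obstacle here; the only points that require care are (i) observing that revealing edges of $G(n,1/2)$ or of $G(n,1/2,k)$ on demand (rather than sampling $G$ upfront) produces the same joint distribution of query answers, which uses only the independence of non-clique edges and the equivalence of the clique distributions, and (ii) handling randomized $A$ correctly, which is automatic once we note that the internal randomness of $A$ and the extra coin flips used by $\wt{A}$ are independent of the pair query answers.
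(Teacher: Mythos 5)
Your proof is correct and takes essentially the same approach as the paper: simulate each edge query of $A$ by one pair query, answering ``edge present'' on a true pair query and flipping a fair coin on a false one, then argue the transcript distributions coincide under both hypotheses. The paper's proof is terser (it phrases the final step as ``conditioning on the extra randomness'' rather than verifying the matches under $\wt{H}_0$ and $\wt{H}_1$ separately), but the construction and the reasoning are the same.
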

\begin{proof}
 There is a direct correspondence between the two hypothesis testing problems,  
 which allows the answers to pair queries to simulate answers to edge queries. 
 Specifically, marked elements of~$[n]$ correspond to planted clique vertices. 
 Thus a pair query that evaluates to true corresponds to querying two planted clique vertices, 
 while a pair query that evaluates to false corresponds to querying two vertices between which the edge is random. 
 Thus given the answer to a pair query, the answer to an edge query can be simulated as follows: 
 if the answer to the pair query is true, the answer to the corresponding edge query is that the edge exists, 
 while if the answer to the pair query is false, then flip a fair coin to answer the corresponding edge query. 
 
 Thus for any algorithm $A$ that makes at most $q$ adaptive edge queries, 
 there exists a corresponding algorithm $\wt{A}$ that makes at most $q$ adaptive pair queries in the variant problem 
 and simulates~$A$. We then let the output of $\wt{A}$ be the same as the output of the simulated algorithm~$A$. 
 Since the simulation of $A$ involves extra randomness, $\wt{A}$ is thus a randomized algorithm. 
 By conditioning on the extra randomness, it follows that the risk of $\wt{A}$ is the same as the risk of $A$. 
\end{proof}
This lemma implies that to prove Theorem~\ref{thm:detection}\eqref{thm:detection_lb} it suffices to prove the analogous result for the variant problem. Consequently, we turn our focus to the variant problem. 
Observe that under $\wt{H}_{0}$ all answers to all pair queries will be false. 
The next lemma considers the alternative hypothesis~$\wt{H}_{1}$. 

\begin{lemma}\label{lem:prob_two_marked_elements}
Let $q \leq \frac{n(n-1)}{k(k-1)}-1$. 
Let $\wt{A}$ be any algorithm that makes at most $q$ adaptive pair queries. 
Let $\cE_{q}$ denote the event that all of the 
pair queries of $\wt{A}$ evaluate to false. 
We then have that 
\begin{equation}\label{eq:prob_all_false}
 \wt{\p}_{1} \left( \cE_{q} \right) \geq 1 - q \frac{k(k-1)}{n(n-1)}. 
\end{equation}
In particular, if $q = o(n^{2}/k^{2})$ as $n \to \infty$, 
then $\wt{\p}_{1} \left( \cE_{q} \right) = 1 - o(1)$ as $n \to \infty$. 
\end{lemma}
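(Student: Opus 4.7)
The plan is to decouple the adaptive structure of $\wt{A}$ from the randomness in $K$ via a \emph{phantom execution}. Define $(i_1,j_1),\ldots,(i_q,j_q)$ to be the sequence of pairs that $\wt{A}$ would query if every answer it ever received were ``false.'' This phantom sequence depends only on the internal randomness of $\wt{A}$, not on $K$, because $\wt{A}$'s queries are a function of its internal randomness and the history of previous answers. On the event $\cE_q$, the real answers are all false, so by induction on $t$ the actual queries agree with the phantom queries throughout; consequently, $\cE_q$ coincides with the event that no phantom pair $(i_t,j_t)$ satisfies $\{i_t,j_t\}\subseteq K$.

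Given this reformulation, the bound follows from a conditional union bound. First I would condition on the internal randomness of $\wt{A}$, which fixes the phantom pairs $(i_1,j_1),\ldots,(i_q,j_q)$. For any fixed pair $\{i,j\}$, since $K$ is a uniformly random $k$-subset of $[n]$, we have $\wt{\p}_{1}(\{i,j\}\subseteq K)=\binom{n-2}{k-2}/\binom{n}{k}=\frac{k(k-1)}{n(n-1)}$. A union bound over the $q$ phantom pairs yields $\wt{\p}_{1}(\cE_q^{c}\mid\text{internal randomness})\leq q\,\frac{k(k-1)}{n(n-1)}$, and integrating over the internal randomness gives~\eqref{eq:prob_all_false}. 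The hypothesis $q\leq\frac{n(n-1)}{k(k-1)}-1$ is only there to keep the right-hand side of~\eqref{eq:prob_all_false} nontrivial, and the `in particular' statement is immediate: if $q=o(n^{2}/k^{2})$ then $q\,\frac{k(k-1)}{n(n-1)}=o(1)$.

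The only real subtlety I expect is the formal construction of the phantom execution and the verification that $\cE_q$ equals the event that no phantom pair lies in $K$. This is essentially a step-by-step coupling: at each time $t$, if the first $t-1$ real answers coincided with the phantom answers (all false), then $\wt{A}$'s query rule produces the same pair $(i_t,j_t)$ in both executions, and the real answer at step $t$ evaluates to false iff $\{i_t,j_t\}\not\subseteq K$. Induction on $t$ shows that on the event that no phantom pair lies in $K$, all real queries equal the phantom queries and all real answers are false (hence $\cE_q$ holds), while the reverse inclusion is immediate since on $\cE_q$ the real and phantom executions agree by construction. Once this identification is in place, the rest of the argument is just a clean union bound.
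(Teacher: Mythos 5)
Your proposal is correct and takes essentially the same approach as the paper: both isolate the all-false query path (your ``phantom execution,'' the paper's sequence $(i_\ell', j_\ell')$) and observe that $\cE_q$ is exactly the event that $K$ contains none of the queried pairs along that path. The paper reaches the bound by first computing the exact conditional law of $K$ given $\cE_q$ via Bayes's rule (an identity it also reuses in the estimation lower bound) and then counting $|\cS_q|$, whereas your direct union bound on $\wt{\p}_1(\cE_q^c)$ is a slightly cleaner route to the same inequality and, as you note, never actually needs the hypothesis $q \leq \frac{n(n-1)}{k(k-1)} - 1$.
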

\begin{proof}
To highlight the key elements of the proof, we first prove the claim for deterministic algorithms, where each query is a deterministic function of the previous queries and the  answers to them; at the end of the proof we address how the proof changes for randomized algorithms, which may use additional randomness. 
Thus for now assume that the algorithm $\wt{A}$ is deterministic. 
To describe the structure of deterministic algorithms we introduce some notation. 
We let $e_{1}, e_{2}, \ldots$ denote the pair queries made by the algorithm. 
Furthermore, let $X_{1}, X_{2}, \ldots$ denote the answers to the pair queries, as follows: 
$X_{\ell} = 0$ if the $\ell$th pair query $e_{\ell}$ evaluates to false, 
and $X_{\ell} = 1$ if the $\ell$th pair query $e_{\ell}$ evaluates to true. 
Any deterministic algorithm $\wt{A}$ can thus be described as follows: 
\begin{itemize}
\item First, $\wt{A}$ makes the pair query $e_{1} = \left( i_{1}, j_{1} \right)$. 
The algorithm receives the answer $X_{1}$ (which depends on the realization of $K$). 
\item The next pair query of $\wt{A}$ depends on the answer $X_{1}$: 
\begin{itemize}
\item if $X_{1} = 0$, then $\wt{A}$ makes the pair query $e_{2} = \left( i_{2}^{(0)}, j_{2}^{(0)} \right)$; 
\item if $X_{1} = 1$, then $\wt{A}$ makes the pair query $e_{2} = \left( i_{2}^{(1)}, j_{2}^{(1)} \right)$. 
\end{itemize}
The algorithm receives the answer $X_{2}$ (which again depends on the realization of $K$). 
\item The third pair query of $\wt{A}$ depends on the answers $X_{1}$ and $X_{2}$:
\begin{itemize} 
\item if $X_{1} = 0$ and $X_{2} = 0$, then $\wt{A}$ makes the pair query $e_{3} = \left( i_{3}^{(0,0)}, j_{3}^{(0,0)} \right)$; 
\item if $X_{1} = 0$ and $X_{2} = 1$, then $\wt{A}$ makes the pair query $e_{3} = \left( i_{3}^{(0,1)}, j_{3}^{(0,1)} \right)$; 
\item if $X_{1} = 1$ and $X_{2} = 0$, then $\wt{A}$ makes the pair query $e_{3} = \left( i_{3}^{(1,0)}, j_{3}^{(1,0)} \right)$; 
\item if $X_{1} = 1$ and $X_{2} = 1$, then $\wt{A}$ makes the pair query $e_{3} = \left( i_{3}^{(1,1)}, j_{3}^{(1,1)} \right)$. 
\end{itemize} 
The algorithm receives the answer $X_{3}$. 
\item And so on. The pair query $e_{\ell+1}$ of $\wt{A}$ depends on the answers $X_{1}, X_{2}, \ldots, X_{\ell}$ as follows: for every $x \in \left\{ 0, 1 \right\}^{\ell}$, 
if $\left( X_{1}, X_{2}, \ldots, X_{\ell} \right) = x$, 
then 
$e_{\ell + 1} = \left( i_{\ell+1}^{x}, j_{\ell+1}^{x} \right)$. 
\end{itemize}
Thus the set of pairs 
\begin{equation}\label{eq:pairs}
\left\{ \left( i_{1}, j_{1} \right) \right\} \cup 
\left\{ \left( i_{\ell}^{x}, j_{\ell}^{x} \right) : \ell \geq 2, x \in \left\{ 0, 1 \right\}^{\ell - 1} \right\}
\end{equation}
completely determines how the algorithm $\wt{A}$ behaves for any realization of $K$; 
and vice versa: any set of pairs as in~\eqref{eq:pairs} determines a deterministic pair query algorithm $\wt{A}$. (Note that in the description of a deterministic algorithm $\wt{A}$ we have only described how the algorithm makes the pair queries and not how the algorithm produces an output after making $q$ adaptive pair queries---for the purposes of the claim this is all that we care about.)

In the following we thus fix the deterministic algorithm $\wt{A}$ by fixing the set of pairs in~\eqref{eq:pairs}. 
Also, for notational convenience, 
we write $\left( i_{\ell}', j_{\ell}' \right)$ for $\left( i_{\ell}^{x}, j_{\ell}^{x} \right)$ 
when $\ell \geq 2$ and $x = \left( 0, 0, \ldots, 0 \right) \in \left\{ 0, 1 \right\}^{\ell - 1}$; 
furthermore, let $i_{1}' \equiv i_{1}$ and $j_{1}' \equiv j_{1}$. 
Recall that  $\cE_{q}$ denotes the event that the first $q$ adaptive pair queries of $\wt{A}$ all evaluate to false. 
We prove~\eqref{eq:prob_all_false}  
by determining the conditional law of $K$ given the event $\cE_{q}$. 
Note that we fixed the set of pairs in~\eqref{eq:pairs}, 
and thus we know that, given $\cE_{q}$, the first $q$ pair queries were 
$e_{1} = \left(i_{1}', j_{1}' \right), \ldots, e_{q} = \left(i_{q}', j_{q}' \right)$. 
Let $\cS_{q}$ denote the set of $k$-tuples such that there was a pair query among these first $q$ pair queries that queried a pair from this $k$-tuple; 
that is, 
\[
\cS_{q} := \left\{ S \subseteq [n] : \left| S \right| = k, \exists\ \ell \in [q] : i_{\ell}', j_{\ell}' \in S \right\}.
\]
Since all pair queries evaluated to false, no $k$-tuple in $\cS_{q}$ can be the marked subset given $\cE_{q}$ (since otherwise a pair query would have evaluated to true). 
That is, 
\[
\wt{\p}_{1} \left( K = S \, \middle| \, \cE_{q} \right) = 0 
\]
for all $S \in \cS_{q}$. 
Now let us consider a $k$-tuple that is not in $\cS_{q}$. 
By Bayes's rule we have that 
\[
\wt{\p}_{1} \left( K = S \, \middle| \, \cE_{q} \right)
= 
\frac{\wt{\p}_{1} \left( \cE_{q} \, \middle| \, K = S \right) \wt{\p}_{1} \left( K = S \right)}{\wt{\p}_{1} \left( \cE_{q} \right)}.
\]
Since the prior on $K$ is uniform, 
we have that 
$\wt{\p}_{1} \left( K = S \right) = 1 / \binom{n}{k}$. 
Now since $S \notin \cS_{q}$, 
if $K = S$, then the answers to the pair queries 
$e_{1} = \left(i_{1}', j_{1}' \right), \ldots, e_{q} = \left(i_{q}', j_{q}' \right)$ 
are necessarily all false (due to the definition of $\cS_{q}$). 
Therefore for every $S \notin \cS_{q}$ we have that 
$\wt{\p}_{1} \left( \cE_{q} \, \middle| \, K = S \right) = 1$. 
Thus we have shown that for every $S \notin \cS_{q}$ we have that 
\[
\wt{\p}_{1} \left( K = S \, \middle| \, \cE_{q} \right)
= \frac{1}{\binom{n}{k} \wt{\p}_{1} \left( \cE_{q} \right)}.
\]
Altogether, we have shown that the conditional law of $K$ given $\cE_{q}$ is given by 
\[
\wt{\p}_{1} \left( K = S \, \middle| \, \cE_{q} \right)
= \frac{1}{\binom{n}{k} \wt{\p}_{1} \left( \cE_{q} \right)} \mathbf{1}_{\left\{ S \notin \cS_{q} \right\}}.
\]
Notice that this conditional probability is equal for all $k$-tuples that are not in $\cS_{q}$. 
Therefore we also have that 
\[
\wt{\p}_{1} \left( K = S \, \middle| \, \cE_{q} \right)
= \frac{1}{\binom{n}{k} - \left| \cS_{q} \right|} \mathbf{1}_{\left\{ S \notin \cS_{q} \right\}}.
\]
Putting together the previous two displays we have that 
\begin{equation}\label{eq:probEq}
\wt{\p}_{1} \left( \cE_{q} \right) 
= 1 - \frac{\left| \cS_{q} \right|}{\binom{n}{k}}.
\end{equation}
Note that every pair $(i,j)$ (where $1 \leq i < j \leq n$) is part of 
exactly $\binom{n-2}{k-2}$ different subsets of $k$ elements. 
This implies the following upper bound on the size of $\cS_{q}$: 
\[
\left| \cS_{q} \right| 
\leq q \binom{n-2}{k-2}
= q \frac{k(k-1)}{n(n-1)} \binom{n}{k}. 
\]
Plugging this bound back into~\eqref{eq:probEq} we obtain~\eqref{eq:prob_all_false}, as desired.

Finally, we discuss randomized algorithms, which may use additional randomness. 
We may condition on the extra randomness and then use the argument above for deterministic algorithms. 
This shows that no matter what the realization of the additional randomness is, the conditional probability of all $q$ pair queries evaluating to false is at least 
$1 - q \frac{k(k-1)}{n(n-1)}$. 
Taking an expectation over the additional randomness then shows the desired claim. 
\end{proof}

We are now ready to prove the analogue of Theorem~\ref{thm:detection}\eqref{thm:detection_lb} for the variant problem. 
\begin{corollary}[Detecting a marked set of elements]\label{cor:detection_lb_variant}
Consider the hypothesis testing problem $\wt{H}_{0}$ versus $\wt{H}_{1}$. 
Let $q = o(n^{2}/k^{2})$ as $n \to \infty$. 
If an algorithm $\wt{A}$ makes at most $q$ adaptive pair queries 
then its risk must satisfy $\wt{R}(\wt{A}) \geq 1 - o(1)$ as $n \to \infty$. 
\end{corollary}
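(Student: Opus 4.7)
My plan is to reduce the corollary to Lemma~\ref{lem:prob_two_marked_elements} via a simple indistinguishability argument. The key observation is that under $\wt{H}_{0}$ every pair query evaluates to false deterministically (there are no marked elements), so $\wt{\p}_{0}(\cE_{q}) = 1$, while Lemma~\ref{lem:prob_two_marked_elements} tells us that $\wt{\p}_{1}(\cE_{q}) = 1 - o(1)$ under the assumption $q = o(n^{2}/k^{2})$. Thus, with probability $1-o(1)$ under either hypothesis, the algorithm sees an all-false transcript.

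The second step is to observe that, conditional on the internal randomness of $\wt{A}$ and on the event $\cE_{q}$, the sequence of pair queries and answers is the same deterministic trajectory under $\wt{\p}_{0}$ and $\wt{\p}_{1}$. Indeed, conditional on $\cE_{q}$, every answer is false, and by the tree description of adaptive algorithms given in the proof of Lemma~\ref{lem:prob_two_marked_elements}, the queries themselves are then a fixed sequence. Hence the algorithm's output conditional on $\cE_{q}$ has the same distribution under both hypotheses. Writing $p := \wt{\p}_{0}(\wt{A} = 1) = \wt{\p}_{0}(\wt{A} = 1 \mid \cE_{q}) = \wt{\p}_{1}(\wt{A} = 1 \mid \cE_{q})$, I can bound the type II error by
\[
\wt{\p}_{1}(\wt{A} = 0) \;\geq\; \wt{\p}_{1}(\wt{A} = 0,\, \cE_{q}) \;=\; \wt{\p}_{1}(\cE_{q})(1-p).
\]

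Combining the two contributions,
\[
\wt{R}(\wt{A}) \;\geq\; p + \wt{\p}_{1}(\cE_{q})(1-p) \;=\; \wt{\p}_{1}(\cE_{q}) + p\bigl(1 - \wt{\p}_{1}(\cE_{q})\bigr) \;\geq\; \wt{\p}_{1}(\cE_{q}) \;=\; 1 - o(1),
\]
which yields the desired lower bound uniformly in $p \in [0,1]$. For randomized algorithms, the same argument goes through after conditioning on the internal randomness (so that the conditioned algorithm is deterministic) and then taking an expectation, exactly as in the last paragraph of the proof of Lemma~\ref{lem:prob_two_marked_elements}.

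There is no real obstacle here: the corollary follows almost immediately from Lemma~\ref{lem:prob_two_marked_elements}. The only delicate point is formalizing the indistinguishability claim, i.e.\ that conditional on $\cE_{q}$ the algorithm's view, and hence its output, has identical distribution under $\wt{\p}_{0}$ and $\wt{\p}_{1}$; this is where the deterministic-tree description from Lemma~\ref{lem:prob_two_marked_elements} is invoked.
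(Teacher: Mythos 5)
Your proof is correct and takes essentially the same route as the paper: both reduce to Lemma~\ref{lem:prob_two_marked_elements}, note that under $\wt{H}_{0}$ the transcript is deterministically all-false, observe that the algorithm's output on the all-false transcript has a fixed distribution, and then bound the risk from below by $\wt{\p}_{1}(\cE_q) = 1-o(1)$. You are merely more explicit than the paper about the indistinguishability step (that conditional on $\cE_q$ the transcript, and hence the output distribution, coincides under the two hypotheses), which the paper handles implicitly by introducing the single parameter $\alpha$ for the output probability on the all-false transcript.
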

\begin{proof}
No matter what algorithm $\wt{A}$ does, 
all of its pair queries will evaluate to false under $\wt{H}_{0}$ (by definition), 
and all of its pair queries will evaluate to false under $\wt{H}_{1}$ with probability $1-o(1)$ (by Lemma~\ref{lem:prob_two_marked_elements}). 
Suppose that $\wt{A}$ outputs $0$ with probability $\alpha$ and $1$ with probability $1-\alpha$ when all of its queries evaluate to false, where $\alpha \in [0,1]$. 
The first sentence of the proof then implies that its risk is at least 
$\wt{R}(\wt{A}) \geq (1-\alpha) + \alpha(1-o(1)) = 1 - o(1)$. 
\end{proof}

\begin{proof}[Proof of Theorem~\ref{thm:detection}\eqref{thm:detection_lb}]
 This follows directly from Lemma~\ref{lem:reduction} and Corollary~\ref{cor:detection_lb_variant}. 
\end{proof}

We now turn to proving Theorem~\ref{thm:estimation}\eqref{thm:estimation_lb}. 
Here too we leverage the connection to the corresponding estimation problem for the simplified variant problem, 
where we aim to estimate the set of marked elements.

\begin{lemma}\label{lem:estimation_lb_variant}
 Let $K \subseteq [n]$ be a uniformly randomly chosen set of size $|K| = k$, where $1 \leq k < n$. Let the elements of $K$ be marked and let the elements of $[n] \setminus K$ be unmarked. 
 Let $q = o(n^{2} / k^{2} + n)$ as $n \to \infty$. 
 If $\wh{K}$ is any estimator of the marked set $K$ that is based on at most $q$ adaptive pair queries, 
 then $\wh{K}$ satistifies 
 $\wt{\p}_{1} \left( \wh{K} = K \right) = o(1)$ as $n \to \infty$. 
\end{lemma}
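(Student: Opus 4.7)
The plan is to split based on the size of $k$, since $q = o(n^2/k^2 + n)$ is equivalent to $q = o(\max(n^2/k^2, n))$: if $k \leq \sqrt{n}$ then $q = o(n^2/k^2)$, and if $k > \sqrt{n}$ then $q = o(n)$. These two regimes require different arguments. Throughout, randomized algorithms are handled by conditioning on the algorithm's internal randomness and invoking the deterministic bounds.

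\textbf{Case 1 ($k \leq \sqrt{n}$, so $q = o(n^2/k^2)$).} I would apply Lemma~\ref{lem:prob_two_marked_elements}: with probability $1 - o(1)$ all $q$ pair queries evaluate to false. Given this event $\cE_q$, the queries form the deterministic sequence $(i_1', j_1'), \ldots, (i_q', j_q')$, and the posterior on $K$ is uniform over the $\binom{n}{k} - |\cS_q| \geq \binom{n}{k}(1-o(1))$ many $k$-subsets avoiding every queried pair. Any estimator therefore satisfies $\wt{\p}_1(\wh{K} = K \mid \cE_q) \leq (1 + o(1))/\binom{n}{k} = o(1)$, and combined with $\wt{\p}_1(\cE_q^c) = o(1)$ this gives $\wt{\p}_1(\wh{K} = K) = o(1)$.

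\textbf{Case 2 ($k > \sqrt{n}$, so $q = o(n)$).} Let $V_q$ denote the set of vertices touched by any query and $V_u := [n] \setminus V_q$, so $|V_u| \geq n - 2q$. Since every query only involves vertices in $V_q$, the transcript is invariant under permutations of $V_u$, so the posterior on $K$ contains all sets obtained from $K$ by permuting $V_u$ and has support at least $\binom{|V_u|}{|K \cap V_u|}$. Unless $|K \cap V_u| \in \{0, |V_u|\}$, this is at least $|V_u| \geq n - 2q$, giving
\[
\wt{\p}_1(\wh{K} = K) \le \wt{\p}_1(K \subseteq V_q) + \wt{\p}_1(K^c \subseteq V_q) + \frac{1}{n - 2q}.
\]
The last term is $o(1)$ immediately. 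For the first, I would show $\E[|K \cap V_q|] \leq 2qk/(n-2q)$ by the same symmetry argument applied step by step: for any untouched vertex $v$ at step $\ell$,
\[
\wt{\p}_1(v \in K \mid T_{\ell-1}) = \frac{k - \E[|K\cap V_{q,\ell-1}| \mid T_{\ell-1}]}{n - |V_{q,\ell-1}|} \le \frac{k}{n-2q},
\]
so each of the $\le 2q$ new-vertex slots contributes at most $k/(n-2q)$ to $\E[|K \cap V_q|]$. Markov's inequality then yields $\wt{\p}_1(K \subseteq V_q) = \wt{\p}_1(|K \cap V_q| \ge k) \leq 2q/(n-2q) = o(1)$, and the symmetric argument bounding $\E[|K^c \cap V_q|] \leq 2q(n-k)/(n-2q)$ gives $\wt{\p}_1(K^c \subseteq V_q) \leq 2q/(n-2q) = o(1)$.

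The main obstacle is the bound $\wt{\p}_1(K \subseteq V_q) = o(1)$ for adaptive algorithms: naive counting bounds such as $2^q \binom{2q}{k}/\binom{n}{k}$ are too weak once $q$ is comparable to $n$. The step-by-step argument above succeeds because the marginal posterior probability that any new untouched vertex is marked is uniformly bounded by $k/(n-|V_{q,\ell-1}|)$, regardless of how cleverly the algorithm adapts its next query based on the transcript so far.
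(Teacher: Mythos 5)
Your Case~1 is identical to the paper's: both invoke Lemma~\ref{lem:prob_two_marked_elements} and observe that conditional on $\cE_q$ the posterior is uniform over $\bigl(1-o(1)\bigr)\binom{n}{k}$ candidates, so the best guess succeeds with probability $o(1)$. Your Case~2, however, takes a genuinely different and correct route. The paper's proof strengthens the adversary by replacing pair queries with \emph{strong pair queries} (revealing the marked status of both endpoints), uses the exchangeability of the posterior over untouched vertices to argue inductively that the algorithm may be taken to query the fixed pairs $(1,2),(3,4),\ldots,(2q-1,2q)$ without loss of generality, and then reduces the success probability to the closed-form expectation $\wt{\E}_1\bigl[1/\binom{n-2q}{k-X}\bigr]$ for a hypergeometric $X$, which it bounds in three sub-cases. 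You avoid the strengthening and the non-adaptivity reduction entirely: you use the same exchangeability observation directly to bound the conditional success probability by $1/\binom{|V_u|}{k-|\wh K \cap V_q|} \le 1/(n-2q)$ away from the two boundary configurations $\wh K \cap V_u \in \{\emptyset, V_u\}$, and you control those boundary events with a step-by-step (tower-rule plus Markov) argument showing $\E[|K\cap V_q|]\le 2qk/(n-2q)$ and similarly for $K^c$. The paper's reduction is arguably cleaner conceptually since it eliminates adaptivity once and for all, while your Doob-decomposition-style bound on $\E[|K\cap V_q|]$ shows more explicitly that adaptivity cannot concentrate the queries on marked (or unmarked) vertices, and sidesteps the three-way case analysis on how $k$ compares to $2q$ and $n-2q$. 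Both arguments are correct and of comparable length; I see no gap in yours.
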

\begin{proof} There are two cases to consider. 
 First, consider the case when $q = o(n^{2}/k^{2})$. 
 The proof of Lemma~\ref{lem:prob_two_marked_elements} shows that, 
 with probability $1-o(1)$,  
 after $q$ adaptive pair queries there remain a $(1-o(1))$ fraction of subsets of size $k$ that are equally likely to be the marked set. 
 No estimator can do better than pick randomly among these, 
 and this will succeed with probability 
 $\left( 1 + o(1) \right) / \binom{n}{k} = o(1)$. 
 
Next, consider the case when $q = o(n)$. 
In this case we show that it is not possible to estimate the marked set even for algorithms with significantly more information. 
Specifically, we consider algorithms that can adaptively query $(i,j)$ pairs, where $1 \leq i < j \leq n$, and when pair $(i,j)$ is queried, the algorithm learns, for both $i$ and $j$, whether they are marked or unmarked. We refer to such queries as \emph{strong pair queries} to distinguish them from \emph{pair queries}. 
From the answer to a strong pair query it is possible to determine the answer to the appropriate pair query. Therefore any algorithm that makes $q$ adaptive pair queries can be simulated by an algorithm that makes~$q$ adaptive strong pair queries. 
Thus in order to prove the claim it suffices to show that 
if $\wh{K}$ is any estimator of the marked set $K$ that is based on at most~$q$ adaptive strong pair queries, 
then~$\wh{K}$ satistifies 
$\wt{\p}_{1} \left( \wh{K} = K \right) = o(1)$ as $n \to \infty$. 
This is what we will show; thus in the following we consider algorithms that make at most $q$ adaptive strong pair queries, and we assume that $q = o(n)$.

We now argue that for $q < n/2$ we may, without loss of generality, consider the algorithm that makes the strong pair queries 
$(1,2), (3,4), \ldots, (2q-1,2q)$. 
We argue this by induction. Since the marked set $K$ is chosen uniformly at random, 
the first strong pair query may be $(1,2)$ without loss of generality. 
There are now three cases to consider, depending on the answer to this first strong pair query. 
\begin{itemize}
\item \textbf{Both elements are unmarked.} Suppose that the answer to the strong pair query $(1,2)$ is that both $1$ and $2$ are unmarked. Thus $1,2 \notin K$ and therefore neither $1$ nor $2$ will be in any estimator $\wh{K}$ (since otherwise the estimator will be incorrect). 
The algorithm thus knows that $K \subseteq [n] \setminus \{1,2\}$. Moreover, by Bayes's rule, the conditional distribution of $K$, given that both $1$ and $2$ are unmarked, is uniform among $k$-tuples in $[n] \setminus \{1,2\}$. 
\item \textbf{One element is unmarked, the other is marked.} 
Suppose that the answer to the strong pair query $(1,2)$ is that $1$ is marked and $2$ is unmarked. Thus $1 \in K$ and therefore $1$ will be in any estimator $\wh{K}$ (since otherwise the estimator will be incorrect). We also learn that $2 \notin K$, so~$2$ will not be in any estimator $\wh{K}$ (since otherwise the estimator will be incorrect). 
Moreover, by Bayes's rule, the conditional distribution of $K$, given that $1$ is marked and $2$ is unmarked, is uniform among $k$-tuples in $[n]$ that contain the element $1$ and do not contain the element~$2$. Thus the conditional distribution of $K \setminus \{1,2\}$, given that $1$ is marked and $2$ is unmarked, is uniform among $(k-1)$-tuples in $[n] \setminus \{1,2\}$. 

The case where $1$ is unmarked and $2$ is marked is analogous. 

\item \textbf{Both elements are marked.} 
Suppose that the answer to the strong pair query $(1,2)$ is that both $1$ and $2$ are marked. 
Thus $1,2 \in K$ and therefore both $1$ and $2$ will be in any estimator~$\wh{K}$ (otherwise the estimator will be incorrect). 
Moreover, by Bayes's rule, the conditional distribution of $K$, given that both $1$ and $2$ are marked, is uniform among $k$-tuples in $[n]$ that contain both $1$ and $2$. 
Thus the conditional distribution of $K \setminus \{1,2\}$, given that both $1$ and $2$ are marked, is uniform among $(k-2)$-tuples in $[n] \setminus \{1, 2\}$.
\end{itemize}
In summary, no matter what the answer to the strong pair query $(1,2)$ is, the algorithm  deduces the following two points. 
\begin{itemize}
\item For elements $1$ and $2$, the algorithm knows whether or not to include them in any estimator~$\wh{K}$ that has any possibility of being correct. 
\item The conditional distribution of $K \setminus \{1,2\}$, given the answer to the strong pair query $(1,2)$, is uniform among $m$-tuples in $[n] \setminus \{1,2\}$; 
here $m = k$ if both $1$ and $2$ are unmarked, 
$m = k-1$ if one is unmarked and the other is marked, 
and $m = k-2$ if both $1$ and $2$ are marked. 
\end{itemize}
Due to the \emph{uniformity} of the conditional distribution in the last bullet point, 
the next strong pair query may, without loss of generality, be $(3,4)$. 
More generally, after having made the strong pair queries 
$(1,2), (3,4), \ldots, (2\ell-1,2\ell)$, 
the algorithm deduces the following two points. 
\begin{itemize}
\item For each element in  $\{1,2, \ldots, 2\ell \}$, the algorithm knows whether or not to include them in any estimator $\wh{K}$ that has any possibility of being correct. 
\item The conditional distribution of $K \setminus \{ 1, 2, \ldots 2\ell \}$, 
given the answers to the strong pair queries 
$(1,2), (3,4), \ldots, (2\ell-1,2\ell)$, 
is uniform among $m$-tuples in $[n] \setminus \{1, 2, \ldots, 2 \ell \}$, 
where $m$ is equal to $k$ minus the number of marked elements in $\{ 1, 2, \ldots, 2 \ell \}$. 
\end{itemize}
Again, due to the uniformity of the conditional distibution in the bullet point above, 
the next strong pair query may, without loss of generality, be $(2\ell+1, 2\ell+2)$. 
This finishes the proof of the induction.

Finally, we analyze the algorithm that makes the strong pair queries 
$(1,2), (3,4), \ldots, (2q-1,2q)$. 
After the answers to these strong pair queries, the algorithm knows for each element in $[2q] = \{1, 2, \ldots, 2q \}$ whether they are marked or unmarked. 
Let $S$ denote the subset of marked elements in $[2q]$, 
and let $X := |S|$. 
Any estimator $\wh{K}$ that has any possibility of being correct must include~$S$ as a subset (since otherwise the estimator will be incorrect); similarly, any estimator $\wh{K}$ that has any possibility of being correct must not include any elements in  $[2q] \setminus S$. 
If $X = k$, then this determines that the estimator should be $\wh{K} = S$, and indeed in this case the estimator is correct: $\wh{K} = K$. 
If $X < k$, then the estimator has to choose a subset $\wh{T} \subseteq [n] \setminus [2q]$ of size $k-X$ and outputs $\wh{K} = S \cup \wh{T}$. 
The estimator is then correct (that is, $\wh{K} = K$ holds) if and only if $\wh{T} = K \setminus [2q]$. 
As we have argued above, 
the conditional distribution of 
$K \setminus [2q]$, given the answers to the strong pair queries 
$(1,2), (3,4), \ldots, (2q-1,2q)$, 
is uniform among $(k-X)$-tuples in $[n] \setminus [2q]$. 
Due to the uniformity of this conditional distribution, 
for \emph{any} estimator $\wh{T}$ 
the conditional probability of $\wh{T} = K \setminus [2q]$ is equal to $1/\binom{n-2q}{k-X}$. 
Putting everything together we have thus obtained that 
\begin{equation}\label{eq:estimation}
\wt{\p}_{1} \left( \wh{K} = K \right) 
= \wt{\E}_{1} \left[ \wt{\p}_{1} \left( \wh{K} = K \, \middle| \, X \right) \right] 
= \wt{\E}_{1} \left[ \wt{\p}_{1} \left( \wh{T} = K \setminus [2q] \, \middle| \, X \right) \right] 
= \wt{\E}_{1} \left[ \frac{1}{\binom{n-2q}{k-X}} \right].
\end{equation}
Since the distribution of $K$ is uniform among $k$-tuples in $[n]$, 
the distribution of $X$ is hypergeometric with parameters $n$, $k$, and $2q$. 
We now distinguish three cases based on how the parameters $n$, $k$, and $2q$ relate to each other, and in each case we bound the expected value in~\eqref{eq:estimation}. 
\begin{itemize}
\item \textit{Case 1: $k \leq 2q$.} 
Since $q = o(n)$, we have that $q < n/4$ for all $n$ large enough. 
Thus for all $n$ large enough we have that $k - X \leq k \leq 2q < n - 2q$. 
This implies that, for all $n$ large enough, if $X < k$, then $\binom{n-2q}{k-X} \geq n - 2q \geq n/2$. Also, if $X = k$, then $\binom{n-2q}{k-X} = 1$. We thus have, for all $n$ large enough, that 
\begin{equation}\label{eq:case1_bound}
\wt{\E}_{1} \left[ \frac{1}{\binom{n-2q}{k-X}} \right] 
\leq \frac{2}{n} + \wt{\p}_{1} \left( X = k \right).
\end{equation}
Since $X$ is a hypergeometric random variable with parameters $n$, $k$, and $2q$, we have that 
\begin{equation}\label{eq:case1_hypergeometric}
\wt{\p}_{1} \left( X = k \right) 
= \frac{\binom{k}{k} \binom{n-k}{2q-k}}{\binom{n}{2q}} 
= \frac{(2q)(2q-1) \cdot \ldots \cdot (2q-k+1)}{n(n-1) \cdot \ldots \cdot (n-k+1)}
\leq \frac{2q}{n}.
\end{equation}
Combining~\eqref{eq:case1_bound} and~\eqref{eq:case1_hypergeometric} we have that 
$\wt{\E}_{1} \left[ 1/\binom{n-2q}{k-X} \right] 
\leq (2q+2)/n = o(1)$ 
as $n \to \infty$. 

\item \textit{Case 2: $2q < k < n - 2q$.} Since $X \leq 2q$, in this case we always have that $k-X > 2q-X \geq 0$, so $k - X \geq 1$. Also, $k - X \leq k < n - 2q$. Put together, we have that $1 \leq k - X < n - 2q$, which implies that $\binom{n-2q}{k-X} \geq n - 2q$. Therefore in this case we have that 
\[
\wt{\E}_{1} \left[ \frac{1}{\binom{n-2q}{k-X}} \right] 
\leq \frac{1}{n-2q}.
\]

\item \textit{Case 3: $n-2q \leq k$.} 
Since $q = o(n)$, we have that $q < n/4$ for all $n$ large enough. Note also that $X \leq 2q$. 
Thus for all $n$ large enough we have that 
$k - X \geq (n-2q) - 2q > 0$, so $k - X \geq 1$. Also note that $k - X \leq n - 2q$ by definition. 
If $1 \leq k - X < n - 2q$, then 
$\binom{n-2q}{k-X} \geq n - 2q \geq n/2$, where the second inequality holds for all $n$ large enough. 
We thus have, for all $n$ large enough, that 
\begin{equation}\label{eq:case3_bound}
\wt{\E}_{1} \left[ \frac{1}{\binom{n-2q}{k-X}} \right] 
\leq \frac{2}{n} + \wt{\p}_{1} \left( X = k - (n-2q) \right).
\end{equation}
Since $X$ is a hypergeometric random variable with parameters $n$, $k$, and $2q$, we have that 
\begin{equation}\label{eq:case3_hypergeometric}
\wt{\p}_{1} \left( X = k - (n-2q) \right) 
= \frac{\binom{k}{k-(n-2q)} \binom{n-k}{n-k}}{\binom{n}{2q}} 
= \frac{2q(2q-1) \cdot \ldots \cdot (k-(n-2q)+1)}{n(n-1) \cdot \ldots \cdot (k+1)} 
\leq \frac{2q}{n}.
\end{equation}
Combining~\eqref{eq:case3_bound} and~\eqref{eq:case3_hypergeometric} we have that 
$\wt{\E}_{1} \left[ 1/\binom{n-2q}{k-X} \right] 
\leq (2q+2)/n = o(1)$ 
as $n \to \infty$. 
\end{itemize}
In summary, in all three cases above we have that 
$\wt{\E}_{1} \left[ 1/\binom{n-2q}{k-X} \right] 
= o(1)$
as $n \to \infty$. 
Combining this with~\eqref{eq:estimation} proves the claim. 
\end{proof}

\begin{proof}[Proof of Theorem~\ref{thm:estimation}\eqref{thm:estimation_lb}]
 There exists a direct correspondence between the problem of estimating the planted clique and the problem of estimating the marked set in the variant problem. This correspondence for the estimation problem is analogous to the correspondence for the detection problem described in the proof of Lemma~\ref{lem:reduction}. 
 The proof then follows directly from Lemma~\ref{lem:estimation_lb_variant} and this correspondence. 
\end{proof}


\section*{Acknowledgements}
 
M.Z.R.\ is grateful to David Gamarnik for helpful discussions. 
We also thank Jay Mardia, Joe Neeman, an anonymous reviewer, and an anonymous associate editor for helpful comments, feedback, and questions which helped improve the manuscript. 


\bibliographystyle{abbrv}
\bibliography{bib}




\end{document}